\def\NZQ{\mathbb}               
\def\ZZ{{\NZQ Z}}
\def\frk{\mathfrak}               
\def\Phi{{\frk N}}
\def\opn#1#2{\def#1{\operatorname{#2}}} 
\opn\chara{char} 
\opn\length{\ell} 
\opn\pd{pd} 
\opn\rk{rk}
\opn\projdim{proj\,dim} 
\opn\injdim{inj\,dim} 
\opn\rank{rank}
\opn\depth{depth} 
\opn\grade{grade} 
\opn\height{height}
\opn\embdim{emb\,dim} 
\opn\codim{codim}
\opn\Tr{Tr} 
\opn\bigrank{big\,rank}
\opn\superheight{superheight}
\opn\lcm{lcm}
\opn\trdeg{tr\,deg}
\opn\reg{reg} 
\opn\lreg{lreg} 
\opn\ini{in} 
\opn\lpd{lpd}
\opn\size{size}
\opn\mult{mult}
\opn\dist{dist}
\opn\cone{cone}
\opn\lex{lex}
\opn\rev{rev}
\opn\div{div} \opn\Div{Div} \opn\cl{cl} \opn\Cl{Cl}
\opn\Spec{Spec} \opn\Supp{Supp} \opn\supp{supp} \opn\Sing{Sing}
\opn\Ass{Ass} \opn\Min{Min}
\opn\Ann{Ann} \opn\Rad{Rad} \opn\Soc{Soc}
\opn\Syz{Syz} \opn\Im{Im} \opn\Ker{Ker} \opn\Coker{Coker}
\opn\Am{Am} \opn\Hom{Hom} \opn\Tor{Tor} \opn\Ext{Ext}
\opn\End{End} \opn\Aut{Aut} \opn\id{id} \opn\ini{in}
\opn\nat{nat}
\opn\pff{pf}
\opn\Pf{Pf} \opn\GL{GL} \opn\SL{SL} \opn\mod{mod} \opn\ord{ord}
\opn\Gin{Gin}
\opn\Hilb{Hilb}\opn\adeg{adeg}\opn\std{std}\opn\ip{infpt}
\opn\Pol{Pol}
\opn\sat{sat}
\opn\Var{Var}
\opn\Gen{Gen}
\opn\aff{aff} \opn\con{conv} \opn\relint{relint} \opn\st{st}
\opn\lk{lk} \opn\cn{cn} \opn\core{core} \opn\vol{vol}
\opn\link{link} \opn\star{star}
\opn\gr{gr}
\def\pot#1#2{#1[\kern-0.28ex[#2]\kern-0.28ex]}
\opn\dirlim{\underrightarrow{\lim}}
\opn\inivlim{\underleftarrow{\lim}}
\let\to=\rightarrow
\def\Implies{\ifmmode\Longrightarrow \else
        \unskip${}\Longrightarrow{}$\ignorespaces\fi}
\def\implies{\ifmmode\Rightarrow \else
        \unskip${}\Rightarrow{}$\ignorespaces\fi}
\def\iff{\ifmmode\Longleftrightarrow \else
        \unskip${}\Longleftrightarrow{}$\ignorespaces\fi}
\newtheorem{Theorem}{Theorem}[section]
\newtheorem{Lemma}[Theorem]{Lemma}
\newtheorem{Proposition}[Theorem]{Proposition}
\newtheorem{Remark}[Theorem]{Remark}
\newtheorem{Example}[Theorem]{Example}
\newtheorem{Conjecture}[Theorem]{Conjecture}
\let\epsilon\varepsilon
\let\phi=\varphi
\let\kappa=\varkappa
\def\qed{\ifhmode\textqed\fi
      \ifmmode\ifinner\quad\qedsymbol\else\dispqed\fi\fi}
\def\textqed{\unskip\nobreak\penalty50
       \hskip2em\hbox{}\nobreak\hfil\qedsymbol
       \parfillskip=0pt \finalhyphendemerits=0}
\def\dispqed{\rlap{\qquad\qedsymbol}}
\opn\dis{dis}
\opn\height{height}
\opn\dist{dist}
\def\pnt{{\raise0.5mm\hbox{\large\bf.}}}
\opn\Lex{Lex}
\begin{document}
\title{Regularity and h-polynomials of monomial ideals}
\author{Takayuki Hibi and Kazunori Matsuda}
\address{Takayuki Hibi,
Department of Pure and Applied Mathematics,
Graduate School of Information Science and Technology,
Osaka University, Suita, Osaka 565-0871, Japan}
\email{hibi@math.sci.osaka-u.ac.jp}
\address{Kazunori Matsuda,
Department of Pure and Applied Mathematics,
Graduate School of Information Science and Technology,
Osaka University, Suita, Osaka 565-0871, Japan}
\email{kaz-matsuda@ist.osaka-u.ac.jp}
\subjclass[2010]{05E40, 13H10}
\keywords{Castelnuovo--Mumford regularity, $h$-polynomial, Cameron--Walker graph}
\begin{abstract}
Let $S = K[x_1, \ldots, x_n]$ denote the polynomial ring in $n$ variables over a field $K$ with each $\deg x_i = 1$ and $I \subset S$ a homogeneous ideal of $S$ with $\dim S/I = d$.  The Hilbert series of $S/I$ is of the form $h_{S/I}(\lambda)/(1 - \lambda)^d$, where $h_{S/I}(\lambda) = h_0 + h_1\lambda + h_2\lambda^2 + \cdots + h_s\lambda^s$ with $h_s \neq 0$ is the $h$-polynomial of $S/I$.  It is known that, when $S/I$ is Cohen--Macaulay, one has $\reg(S/I) = \deg h_{S/I}(\lambda)$, where $\reg(S/I)$ is the (Castelnuovo--Mumford) regularity of $S/I$.  In the present paper, given arbitrary integers $r$ and $s$ with $r \geq 1$ and $s \geq 1$, a monomial ideal $I$ of $S = K[x_1, \ldots, x_n]$ with $n \gg 0$ for which $\reg(S/I) = r$ and $\deg h_{S/I}(\lambda) = s$ will be constructed.  Furthermore, we give a class of edge ideals $I \subset S$ of Cameron--Walker graphs with $\reg(S/I) = \deg h_{S/I}(\lambda)$ for which $S/I$ is not Cohen--Macaulay.   
\end{abstract}
\maketitle
\section*{Introduction}
Let $S = K[x_1, \ldots, x_n]$ denote the polynomial ring in $n$ variables over a field $K$ with each $\deg x_i = 1$ and $I \subset S$ a homogeneous ideal of $S$ with $\dim S/I = d$.  The Hilbert series $H_{S/I}(\lambda)$ of $S/I$ is of the form $H_{S/I}(\lambda) = (h_0 + h_1\lambda + h_2\lambda^2 + \cdots + h_s\lambda^s)/(1 - \lambda)^d$, where each $h_i \in \ZZ$ (\cite[Proposition 4.4.1]{BH}).  We say that $h_{S/I}(\lambda) = h_0 + h_1\lambda + h_2\lambda^2 + \cdots + h_s\lambda^s$ with $h_s \neq 0$ is the {\em $h$-polynomial} of $S/I$.  Let $\reg(S/I)$ denote the ({\em Castelnuovo--Mumford}\,) {\em regularity} \cite[p.~168]{BH} of $S/I$.  A well-known fact (e.g., \cite[Lemma 2.5]{BV}) is that, when $S/I$ is Cohen--Macaulay, one has
\[
\reg(S/I) = \deg h_{S/I}(\lambda).
\] 
Its converse is false.  The following example was found by J\"urgen Herzog.  Let $I \subset S = K[x_1,x_2,x_3,x_4]$ be the monomial ideal $(x_2^3, x_2^2x_3, x_2x_3^2, x_3^3, x_1^2, x_1x_2, x_1x_3, x_1x_4)$, which is strongly stable (\cite[p.~103]{HH}).  Then $\dim S/I = 1$, $\depth S/I = 0$, $\reg(S/I) = 2$ and $h_{S/I}(\lambda) = 1 + 3\lambda + 2\lambda^2$.  At this stage, it is reasonable to discover a natural class of monomial ideals $I \subset S = K[x_1, \ldots, x_n]$ for which $S/I$ is not Cohen--Macaulay with $\reg(S/I) = \deg h_{S/I}(\lambda)$.  

Furthermore, one cannot escape from the temptation to present the following 

\begin{Conjecture}
\label{conj}
Given arbitrary integers $r$ and $s$ with $r \geq 1$ and $s \geq 1$, there exists a strongly stable ideal $I$ of $S = K[x_1, \ldots, x_n]$ with $n \gg 0$ for which $\reg(S/I) = r$ and $\deg h_{S/I}(\lambda) = s$.  
\end{Conjecture}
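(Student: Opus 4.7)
The plan is to split into cases according to how $r$ and $s$ compare. The case $r = s$ is settled immediately by the Cohen--Macaulay strongly stable ideal $I = (x_1^{r+1})$, whose $h$-polynomial is $1 + \lambda + \cdots + \lambda^r$.

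For $s > r$, I propose the strongly stable ideal
\[
I_{r,s} \;=\; x_1^r \cdot (x_1, x_2, \ldots, x_{s-r+1})
\]
in $S = K[x_1, \ldots, x_n]$ with $n$ sufficiently large. Strong stability is immediate, and $\reg(S/I_{r,s}) = r$ since every generator has degree at most $r+1$. Splitting monomials outside $I_{r,s}$ according to whether $a_1 \leq r-1$ (with all other exponents free) or $a_1 = r$ with $a_2 = \cdots = a_{s-r+1} = 0$, a direct computation of the Hilbert series yields
\[
h_{S/I_{r,s}}(\lambda) \;=\; (1 + \lambda + \cdots + \lambda^{r-1}) + \lambda^r (1-\lambda)^{s-r},
\]
whose leading coefficient $(-1)^{s-r}$ is nonzero, giving $\deg h = s$.

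The case $s < r$ is more delicate. For $s = r-1$ (so $r \geq 2$) the ideal $I = (x_1^r, x_1 x_2, x_1 x_3, x_2^{r+1})$ is strongly stable with $\reg(S/I) = r$, and the analogous case analysis produces the numerator $(1 + \lambda + \cdots + \lambda^r) + (\lambda + \cdots + \lambda^{r-1})(1-\lambda) = 1 + 2\lambda + \lambda^2 + \cdots + \lambda^{r-1}$ after cancellation of the $\lambda^r$ terms, so $\deg h = r-1$. For general $s < r-1$ the natural plan is to iterate, adding further nested layers of \emph{killing} generators such as $x_2^r, x_2 x_3, x_2 x_4, x_3^{r+1}$ and continuing in the same pattern, each layer designed to cancel one more top coefficient of $h$. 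The principal obstacle is arranging the iteration so that strong stability is preserved, the maximum generator degree remains $r+1$, and the top-coefficient cancellations are exact at every stage; I expect the combinatorial bookkeeping to be delicate but tractable. A more conceptual alternative route is Macaulay's theorem: since every admissible Hilbert function is attained by a lex-segment ideal, and lex-segment ideals are automatically strongly stable, it would suffice to exhibit for each pair $(r, s)$ with $s < r$ an admissible Hilbert function whose $h$-polynomial has degree $s$ and whose associated lex-segment ideal has maximum generator degree exactly $r+1$. Characterising the Hilbert functions realising both conditions is then the key step, and is where I expect the main work to lie.
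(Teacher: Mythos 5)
You should first note that the statement you are proving is stated in the paper as a \emph{conjecture}: the authors themselves only verify it in the range $1 \le r \le s$ (Proposition \ref{PropA}) and, for $r > s$, prove only the weakened Theorem \ref{main} in which ``strongly stable'' is dropped and $I$ is merely required to be a monomial ideal. Your cases $r=s$ and $s>r$ are correct and essentially reproduce the paper's first step: your ideal $x_1^r(x_1,\dots,x_{s-r+1})$ is the non-squarefree analogue of the paper's squarefree lexsegment ideal $(u_1\cdots u_r)(u_{r+1},\dots,u_{s+1})$, yields the same numerator $1+\lambda+\cdots+\lambda^{r-1}+\lambda^r(1-\lambda)^{s-r}$, and has the merit of being genuinely strongly stable (with $\reg$ computed by Eliahou--Kervaire), which is actually closer to the literal statement of the conjecture than the paper's own example.

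The case $s<r$, however, contains a genuine gap, and it is exactly the case that is open. Concretely, your ideal $I=(x_1^r,\,x_1x_2,\,x_1x_3,\,x_2^{r+1})$ for $s=r-1$ is \emph{not} strongly stable once $r\ge 3$: applying the exchange $x_3\mapsto x_1$ to the generator $x_1x_3$ produces $x_1^2$, which lies in $I$ only when $r\le 2$. Repairing this by adjoining $x_1^2$ destroys the construction (it makes $x_1^r$ redundant and collapses the regularity), so the defect is structural, not cosmetic. Beyond that, for $s<r-1$ you offer only a programme --- an iteration whose preservation of strong stability, generator degree and exact top-coefficient cancellation you explicitly do not verify, and an alternative via Macaulay's theorem whose key step (characterising admissible Hilbert functions whose lex ideal has maximal generator degree exactly $r+1$) you likewise leave open. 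For comparison, the paper's route through $r>s$ requires a rather elaborate family $I_n, J_n, K_n, L_n$ analysed by induction on colon and sum ideals via Lemma \ref{LemC} and Ferrers-graph regularity, and even then the resulting ideals are not strongly stable; so the portion of your argument that is missing is precisely the part that nobody, including the authors, has supplied.
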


It follows from \cite[Lemma 4.1.3]{BH} that if $I$ has a {\em pure resolution} (\cite[p.~153]{BH}), then
\[
\deg h_{S/I}(\lambda) - \reg(S/I) = \dim S/I - \depth S/I. 
\] 
As a result, when $1 \leq r \leq s$, a desired ideal can be found in the class of squarefree lexsegment ideals (\cite{AHH}, \cite[p.~124]{HH}).  

The purpose of the present paper is to give an affirmative answer to a weak version of Conjecture \ref{conj}, that is to say, a monomial ideal $I$ of $S = K[x_1, \ldots, x_n]$ for which $\reg(S/I) = r$ and $\deg h_{S/I}(\lambda) = s$ will be constructed. 

\begin{Theorem}
\label{main}
Given arbitrary integers $r$ and $s$ with $r \geq 1$ and $s \geq 1$, there exists a monomial ideal $I$ of $S = K[x_1, \ldots, x_n]$ with $n \gg 0$ for which $\reg(S/I) = r$ and $\deg h_{S/I}(\lambda) = s$.  
\end{Theorem}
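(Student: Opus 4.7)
The plan is to treat the regimes $r \leq s$ and $r > s$ separately.

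For $r \leq s$, I would use the recipe sketched in the introduction. If $I$ is a squarefree lexsegment ideal (\cite{AHH}), its resolution is pure, so $\deg h_{S/I}(\lambda) - \reg(S/I) = \dim S/I - \depth S/I \geq 0$; adjusting the shape of the lexsegment and the number of variables $n$ will realize any prescribed $(r,s)$ with $r \leq s$.

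The substantive case is $r > s$, which forces $S/I$ to be non-Cohen--Macaulay. The mechanism I would exploit is the Grothendieck formula
\[
H_{S/I}(\lambda) \;=\; \sum_{i \geq 0} (-1)^i H_{H^i_\mm(S/I)}(\lambda)
\]
(as formal Laurent series): cancellations in this alternating sum can shrink the rational degree of $H_{S/I}$, equivalently $\deg h_{S/I}(\lambda) - \dim S/I$, below $\max_i a_i(S/I)$, where $a_i(S/I) := \sup\{j : H^i_\mm(S/I)_j \neq 0\}$, while $\reg(S/I) = \max_i (a_i(S/I)+i)$ is not subject to such cancellations. Concretely I would try $I = J_{\mathrm{art}} + J_{\mathrm{cm}} + J_{\mathrm{glue}}$ in $S = K[\mathbf{x},\mathbf{y}]$, where $J_{\mathrm{art}}$ is an Artinian monomial ideal in the $\mathbf{x}$-variables contributing an $\mm$-torsion submodule of top degree $r$ (forcing $a_0(S/I) = r$, hence $\reg(S/I) \geq r$), $J_{\mathrm{cm}}$ is a Cohen--Macaulay monomial ideal in the $\mathbf{y}$-variables controlling the top-dimensional part, and $J_{\mathrm{glue}}$ consists of mixed monomials $x_i y_j$ that vanish in $S/I$ and engineer the needed cancellations.

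The remaining steps will be: (i) fix the explicit generators of $I$ in terms of $r$, $s$, and $n$; (ii) enumerate the standard monomials of $S/I$ to write down $H_{S/I}(\lambda)$; (iii) clear the factor $(1-\lambda)^{\dim S/I}$ from the numerator and verify $\deg h_{S/I}(\lambda) = s$; (iv) compute $\reg(S/I)$ from a tractable free resolution (e.g.\ the Taylor complex) or from direct local cohomology analysis, and confirm it equals $r$. The hardest part will be step (iii): tuning the exponents so that the numerator cancels down to exactly degree $s$---not $s+1$, not $s-1$---while step (iv) still yields $r$. I expect $n$ will have to grow with both $r$ and $s$, and in particular with $r-s$, so that the gluing block $J_{\mathrm{glue}}$ affords enough degrees of freedom for the cancellation pattern among the top contributions of $J_{\mathrm{art}}$, $J_{\mathrm{cm}}$ and $J_{\mathrm{glue}}$ to land precisely on the target degree.
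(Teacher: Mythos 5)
Your split into the regimes $r \le s$ and $r > s$ matches the paper, and your treatment of $r \le s$ via squarefree lexsegment ideals is essentially the paper's route (the paper makes it concrete with $I_{r,s} = (u_1\cdots u_r)(u_{r+1},\ldots,u_{s+1}) \subset K[u_1,\ldots,u_{s+1}]$, for which $\reg = r$ and $\deg h = s$ follow from a product formula for regularity and one short exact sequence). The problem is the case $r > s$, which is the entire content of the theorem: there you have a strategy but no proof. The Grothendieck formula $H_{S/I}(\lambda) = \sum_i (-1)^i H_{H^i_{\mm}(S/I)}(\lambda)$ correctly explains \emph{why} cancellation can push $\deg h_{S/I}(\lambda)$ below the regularity, but it is a heuristic for existence, not a construction; and your proposed shape $I = J_{\mathrm{art}} + J_{\mathrm{cm}} + J_{\mathrm{glue}}$ is left entirely unspecified. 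You yourself flag that steps (i)--(iv) remain, and step (iii) --- tuning the generators so the numerator cancels to degree exactly $s$ while the regularity stays at exactly $r$ --- is precisely where all the work lies. As written, nothing in the proposal certifies that such a tuning is achievable, so the argument has a genuine gap rather than a deferred routine verification.

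For comparison, the paper resolves $r > s$ by first building, for each $N \ge 1$, a single family $I_N \subset S_N = K[x,y_1,\ldots,y_N,z_1,\ldots,z_{N+1}]$ with $\reg(S_N/I_N) = N+1$ and $\deg h_{S_N/I_N}(\lambda) = 1$; the regularity lower bound comes from the degree-$(N+2)$ generator $xz_1\cdots z_{N+1}$, the upper bound from the colon/sum bound $\reg(I) \le \max\{\reg(I:(x))+1, \reg(I+(x))\}$ applied along a chain of auxiliary ideals $J_N, K_N, L_N$ (with an induction on $N$ for $K_N$), and the Hilbert series from the corresponding short exact sequences. The general case then follows by setting $I = I_{r-s} + (u_1\cdots u_s)$ in $S_{r-s}\otimes_K K[u_1,\ldots,u_s]$, using additivity of $\reg(S/\cdot)$ and multiplicativity of $h$-polynomials under such joins. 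If you want to complete your proposal, the key missing ingredient is exactly such an explicit family realizing $(\reg, \deg h) = (N+1, 1)$, together with a verifiable mechanism (the paper uses Lemma \ref{LemC} plus induction) for the regularity upper bound --- the lower bounds and the reduction to this family are the easy parts.
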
 

When $r > s$, a basic process in order to obtain a required ideal of Theorem \ref{main} is to find a monomial ideal $I = I_N \subset S$ with $\reg(S/I) = N + 1$ and $\deg h_{S/I}(\lambda) = 1$ for an arbitrary integer $N > 0$.  A proof of Theorem \ref{main} will be achieved in Section $1$. 

On the other hand, in Section $2$, we give a class of edge ideals $I \subset S$ of Cameron--Walker graphs (\cite{HHKO}) with $\reg(S/I) = \deg h_{S/I}(\lambda)$ for which $S/I$ is not Cohen--Macaulay.

\section{Proof of Theorem \ref{main}}
Before giving a proof of Theorem \ref{main}, several lemmata will be prepared.
Let, as before, $S = K[x_1, \ldots, x_n]$ denote the polynomial ring in $n$ variables over a field $K$ with each $\deg x_i = 1$.  Lemma \ref{LemA} below follows immediately from the definition of regularity in terms of graded Betti numbers (\cite[p.~48]{HH}).  

\begin{Lemma}\label{LemA}
Let $I \subset S$ be a proper homogeneous ideal.  Then $\reg (S/I) = \reg (I) - 1$.  
\end{Lemma}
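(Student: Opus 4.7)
The plan is to unwind the definition of regularity in terms of graded Betti numbers and compare the minimal graded free resolutions of $S/I$ and $I$. Recall that for a finitely generated graded $S$-module $M$, the Castelnuovo--Mumford regularity is
\[
\reg(M) = \max\{\, j - i : \beta_{i,j}(M) \neq 0 \,\}.
\]

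First I would write down the short exact sequence $0 \to I \to S \to S/I \to 0$. Since $S$ is free and concentrated in degree $0$, comparing the minimal graded free resolutions gives the shift identity $\beta_{i,j}(S/I) = \beta_{i-1,j}(I)$ for every $i \geq 1$ and every $j$. The degree-zero part of the resolution of $S/I$ contributes only $\beta_{0,0}(S/I) = 1$, and this entry gives $j - i = 0$, so it never exceeds the regularity.

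Next I would translate: substituting $i' = i+1$,
\[
\reg(I) = \max\{\, j - i : \beta_{i,j}(I) \neq 0 \,\} = \max\{\, j - (i' - 1) : \beta_{i',j}(S/I) \neq 0,\ i' \geq 1 \,\},
\]
so $\reg(I) = \max\{\, (j - i') + 1 : \beta_{i',j}(S/I) \neq 0,\ i' \geq 1 \,\}$. Hence $\reg(I) - 1$ equals the maximum of $j - i'$ taken over the pairs $(i',j)$ with $i' \geq 1$ and $\beta_{i',j}(S/I) \neq 0$. Together with the harmless contribution $j - i = 0$ coming from $\beta_{0,0}(S/I)$, this gives $\reg(S/I) = \max(0, \reg(I) - 1)$.

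The last step is to observe that since $I$ is a nonzero proper homogeneous ideal it has a minimal generator in some positive degree, so $\beta_{0,d}(I) \neq 0$ for some $d \geq 1$, which forces $\reg(I) \geq 1$ and hence $\reg(I) - 1 \geq 0$. Therefore the maximum collapses and $\reg(S/I) = \reg(I) - 1$. There is no genuine obstacle here; the only delicate point is making sure the $\beta_{0,0}(S/I) = 1$ contribution is properly accounted for and does not spuriously raise the regularity of $S/I$, which is precisely why the hypothesis that $I$ is proper (so that $\reg(I) \geq 1$) is needed.
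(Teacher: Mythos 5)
Your proof is correct and follows exactly the route the paper indicates: the paper gives no written argument beyond remarking that the lemma ``follows immediately from the definition of regularity in terms of graded Betti numbers,'' and your unwinding via the shift $\beta_{i,j}(S/I)=\beta_{i-1,j}(I)$ for $i\ge 1$, together with the observation that $\beta_{0,0}(S/I)=1$ cannot raise the maximum because $\reg(I)\ge 1$, is precisely the omitted verification.
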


\begin{Lemma}[{\cite[Lemma 3.2]{HT}}]
\label{LemB}
Let $S_{1} = K[x_{1}, \ldots, x_{m}]$ and $S_{2} = K[y_{1}, \ldots, y_{n}]$ be polynomial rings over a field $K$.  Let $I_{1}$ be a nonzero homogeneous ideal of $S_{1}$ and $I_2$ that of $S_2$.  Write $S$ for $S_{1} \otimes_{K} S_{2} = K[x_{1}, \ldots, x_{m}, y_{1}, \ldots, y_{n}]$ and regard $I_{1} + I_{2}$ and $I_{1}I_{2}$ as homogeneous ideals of $S$. Then
\begin{enumerate}
	\item[$(1)$] $\reg (I_{1}I_{2}) = \reg (I_{1}) + \reg (I_{2})$\,$;$ 
	\item[$(2)$] $\reg (I_{1} + I_{2}) = \reg (I_{1}) + \reg (I_{2}) - 1$\,$;$
	\item[$(3)$] $\reg (S/(I_{1} + I_{2})) = \reg (S_{1}/I_{1}) + \reg (S_{2}/I_{2})$. 
\end{enumerate}
\end{Lemma}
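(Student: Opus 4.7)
The plan is to exploit the disjointness of the variable sets in $S_1$ and $S_2$ to reduce the computation of graded Betti numbers over $S$ to a Künneth-type formula. Let $\Fc_\bullet \to S_1/I_1$ and $\Gc_\bullet \to S_2/I_2$ be the minimal graded free resolutions over $S_1$ and $S_2$ respectively. I would first observe that the tensor product complex $\Fc_\bullet \otimes_K \Gc_\bullet$ is naturally a complex of free graded $S$-modules (a free $S_1$-module tensored over $K$ with a free $S_2$-module is free over $S_1 \otimes_K S_2 = S$), its differentials have all entries in the maximal ideal $\mm = (x_1,\ldots,x_m,y_1,\ldots,y_n)$, and, because $K$ is a field, the graded Künneth formula shows it has homology concentrated in degree zero with $H_0 = (S_1/I_1) \otimes_K (S_2/I_2) \cong S/(I_1+I_2)$. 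Hence it is a minimal graded free resolution of $S/(I_1+I_2)$ over $S$.

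From minimality I read off the Künneth identity
\[
\beta_{i,j}^{S}(S/(I_1+I_2)) \;=\; \sum_{\substack{p+q=i \\ a+b=j}} \beta_{p,a}^{S_1}(S_1/I_1)\cdot\beta_{q,b}^{S_2}(S_2/I_2).
\]
Part $(3)$ is then immediate, since $\reg(S/(I_1+I_2))$ equals the maximum of $j-i=(a-p)+(b-q)$ over nonvanishing summands and this maximum factorizes as $\reg(S_1/I_1)+\reg(S_2/I_2)$. Part $(2)$ follows by applying Lemma \ref{LemA} three times: $\reg(I_1+I_2) = \reg(S/(I_1+I_2))+1 = \reg(S_1/I_1)+\reg(S_2/I_2)+1 = \reg(I_1)+\reg(I_2)-1$.

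For Part $(1)$ I would repeat the tensor-product construction starting from minimal graded free resolutions of $I_1$ over $S_1$ and $I_2$ over $S_2$, after observing that the $K$-linear multiplication map $I_1 \otimes_K I_2 \to I_1I_2 \subset S$ is an isomorphism of graded $S$-modules: disjointness of variables ensures that products of $K$-basis elements of $I_1$ and $I_2$ remain $K$-linearly independent in $S$. The same Künneth-type Betti identity then yields $\reg(I_1 I_2) = \reg(I_1)+\reg(I_2)$. The main obstacle is the careful verification that the tensor-product construction produces a \emph{minimal} $S$-free resolution (not merely a complex with correct homology) and that the identification $I_1 \otimes_K I_2 \cong I_1 I_2$ in Part $(1)$ holds at the level of graded $S$-modules; both rest on the fact that $S_1 \otimes_K S_2 \to S$ is an isomorphism of graded rings, which is precisely where the disjointness hypothesis is used and where analogous statements fail if the variable sets overlap.
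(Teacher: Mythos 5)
The paper does not actually prove this lemma; it is imported verbatim as a citation to Hoa--Tam \cite[Lemma 3.2]{HT}, so there is no in-paper argument to compare against. Your proof is correct and self-contained, and it follows the standard route one would expect behind such a statement: since $K$ is a field, the tensor product over $K$ of the minimal graded free resolutions $\Fc_\bullet$ and $\Gc_\bullet$ is a complex of free $S$-modules with differentials still landing in $\mm$ times the modules, and the K\"unneth formula (with no Tor correction, as everything is $K$-flat) shows it resolves $(S_1/I_1)\otimes_K(S_2/I_2)\cong S/(I_1+I_2)$; the convolution formula for the graded Betti numbers then gives part $(3)$, and part $(2)$ follows from part $(3)$ via Lemma \ref{LemA} exactly as you say. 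For part $(1)$, your verification that the multiplication map $I_1\otimes_K I_2\to I_1I_2$ is an isomorphism of graded $S$-modules is the one point that genuinely needs the disjointness of the variables, and your argument for it (an element of $I_1I_2$ is a sum $\sum h_i f_i g_i$ with $h_i\in S=S_1\otimes_K S_2$, which can be reabsorbed into the factors) is sound; repeating the tensor-of-resolutions argument for the modules $I_1$ and $I_2$ then gives $\reg(I_1I_2)=\reg(I_1)+\reg(I_2)$. The only cosmetic caveat is that the statement should implicitly assume $I_1$ and $I_2$ are proper (as they are in every application in this paper), since otherwise the regularity bookkeeping in part $(2)$ degenerates; this does not affect the validity of your argument.
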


\begin{Lemma}[{\cite[Lemma 2.10]{DHS}}]
\label{LemC}
Let $I \subset S$ be a monomial ideal and $x_i$ a variable of $S$ which appears in a monomial belonging to the unique minimal system of monomial generators of $I$.  Then    
\[
\reg (I) \le \max\{ \, \reg (I : (x)) + 1, \, \reg (I + (x)) \, \}. 
\]
\end{Lemma}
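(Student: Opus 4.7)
The natural approach is via the short exact sequence of finitely generated graded $S$-modules
\[
0 \to (S/(I:(x)))(-1) \xrightarrow{\;\cdot x\;} S/I \to S/(I+(x)) \to 0,
\]
whose exactness is immediate: the kernel of multiplication by $x$ on $S/I$ is $(I:(x))/I$, so the first map is injective after the grading shift by $-1$, and the cokernel is clearly $S/(I+(x))$. The hypothesis that $x$ appears in some minimal monomial generator of $I$ is used only to exclude trivial cases; the sequence itself makes sense for any variable $x$.

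Next I would invoke the standard regularity bound for a short exact sequence $0 \to A \to B \to C \to 0$ of finitely generated graded $S$-modules, namely $\reg(B) \le \max\{\reg(A), \reg(C)\}$, which follows at once from the long exact sequence of local cohomology at the graded maximal ideal. Applied to the sequence above, and using the elementary identity $\reg(M(-1)) = \reg(M) + 1$, this yields
\[
\reg(S/I) \le \max\{\,\reg(S/(I:(x))) + 1,\; \reg(S/(I+(x)))\,\}.
\]

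Finally I would pass from quotients to ideals by applying Lemma \ref{LemA} to each of $I$, $I:(x)$, and $I+(x)$, all of which are proper homogeneous ideals (for $I+(x)$ this uses that $I$ is proper and $x$ is a single variable; for $I:(x)$ the worst case is $I:(x)=S$, handled separately below). Each application costs or returns a $+1$, and rewriting the displayed inequality in terms of $\reg(I)$, $\reg(I:(x))$, and $\reg(I+(x))$ produces exactly
\[
\reg(I) \le \max\{\,\reg(I:(x)) + 1,\; \reg(I+(x))\,\}.
\]

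The main obstacle is really just the bookkeeping around the degenerate case $x \in I$, in which $I:(x) = S$ and $I + (x) = I$; here one verifies the inequality by hand using $\reg(S)=0$, so that the right-hand side becomes $\max\{1, \reg(I)\} \ge \reg(I)$. Aside from this small check, the argument is a direct combination of the standard short-exact-sequence bound, the shift formula for regularity, and Lemma \ref{LemA}.
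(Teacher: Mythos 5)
Your proof is correct. The paper itself gives no proof of this lemma --- it is quoted directly from \cite[Lemma 2.10]{DHS} --- and your argument (the short exact sequence $0 \to (S/(I:(x)))(-1) \to S/I \to S/(I+(x)) \to 0$, the bound $\reg(B) \le \max\{\reg(A),\reg(C)\}$ from the local cohomology long exact sequence, and the translation via Lemma \ref{LemA}, with the degenerate case $x \in I$ checked by hand) is exactly the standard proof given in that reference.
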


In the first step of a proof of Theorem \ref{main}, given integers $1 \le r \le s$, we construct a monomial ideal $I \subset S$ with $\reg (S/I) = r$ and $h_{S/I} (\lambda) = s$.  As was mensioned in Introduction, a desired ideal $I$ can be found in the class of squarefree lexsegment ideals. 

Let $<_{\mathrm{lex}}$ denote the lexicographic order (\cite[p.~24]{HH}) on $S = K[x_{1}, \ldots, x_{n}]$ induced from $x_{1} > x_{2} > \cdots > x_{n}$.  A monomial ideal $I \subset S$ is called {\em squarefree lexsegment} if $I$ is generated by squarefree monomials and if, for all 
squarefree monomials $u \in I$ and for all squarefree monomials $v \in S$ with 
$\deg u = \deg v$ and $u <_{\mathrm{lex}} v$, one has $v \in I$.

Fix 
integers $r$ and $s$ with $1 \le r \le s$ and consider the squarefree lexsegment ideal 
\[
I _{r, s} = (u_{1}u_{2} \cdots u_{r}u_{r+1}, u_{1}u_{2} \cdots u_{r}u_{r+2}, \ldots, u_{1}u_{2} \cdots u_{r}u_{s+1})
\]
of the polynomial ring $K[u_{1}, \ldots, u_{s+1}]$ in $(s + 1)$ variables over a field $K$.  

\begin{Proposition}\label{PropA}
One has
\begin{enumerate}
	\item[$(1)$] $\reg (K[u_{1}, \ldots, u_{s+1}] / I_{r, s}) = r$\,$;$ 	
	\item[$(2)$] $\displaystyle H_{K[u_{1}, \ldots, u_{s+1}] / I_{r, s}} (\lambda) = \frac{1 + \lambda + \cdots + \lambda^{r-1} + \lambda^{r}(1-\lambda)^{s-r}}{(1-\lambda)^{s}}$.\\
\end{enumerate}	
Thus, in particular, $\deg h_{K[u_{1}, \ldots, u_{s+1}] / I_{r, s}} (\lambda) = s$. 
\end{Proposition}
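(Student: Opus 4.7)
The plan is to exploit the factorization $I_{r,s} = I_1 I_2$, where $I_1 = (u_1 u_2 \cdots u_r)$ is a principal ideal in $K[u_1, \ldots, u_r]$ and $I_2 = (u_{r+1}, u_{r+2}, \ldots, u_{s+1})$ is the graded maximal ideal of $K[u_{r+1}, \ldots, u_{s+1}]$, two ideals supported on disjoint variable sets. Since $I_1$ is principal with generator of degree $r$, one immediately has $\reg(I_1) = r$; and since $I_2$ is generated by distinct variables, its Koszul resolution gives $\reg(I_2) = 1$ (this also covers the edge case $s = r$, where $I_2 = (u_{s+1})$ is principal of degree one). Applying Lemma \ref{LemB}(1) then yields $\reg(I_{r,s}) = r + 1$, and Lemma \ref{LemA} produces part (1).

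For (2), write $R = K[u_1, \ldots, u_{s+1}]$ and $f = u_1 u_2 \cdots u_r$. I would first verify the two elementary identities $(I_{r,s} : f) = (u_{r+1}, \ldots, u_{s+1})$ and $I_{r,s} + (f) = (f)$, both immediate by inspecting generators. These produce the short exact sequence
\[
0 \longrightarrow \bigl(R/(u_{r+1}, \ldots, u_{s+1})\bigr)(-r) \xrightarrow{\cdot f} R/I_{r,s} \longrightarrow R/(f) \longrightarrow 0.
\]
Taking Hilbert series and substituting $H_{R/(u_{r+1}, \ldots, u_{s+1})}(\lambda) = 1/(1-\lambda)^r$ together with $H_{R/(f)}(\lambda) = (1 - \lambda^r)/(1-\lambda)^{s+1}$, then clearing to the common denominator $(1-\lambda)^s$, gives exactly the formula stated in (2).

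Finally, to conclude that $\deg h_{R/I_{r,s}}(\lambda) = s$, I would inspect the numerator $N(\lambda) = 1 + \lambda + \cdots + \lambda^{r-1} + \lambda^r(1-\lambda)^{s-r}$: its top-degree contribution is $\lambda^r \cdot (-\lambda)^{s-r} = (-1)^{s-r}\lambda^s$, while the remaining summands have degree at most $r-1 < s$, so $\deg N = s$. One also needs $N(\lambda)$ to be coprime to $(1-\lambda)$ so that $\dim R/I_{r,s}$ really equals $s$ and the displayed numerator is the genuine $h$-polynomial; this follows from $N(1) = r$ when $s > r$ and $N(1) = r + 1$ when $s = r$, both nonzero. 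The whole argument is essentially mechanical, and the only bookkeeping wrinkle is the boundary case $s = r$, in which $(1-\lambda)^{s-r} = 1$ and $I_{r,s}$ degenerates to a principal ideal; there is no real obstacle beyond keeping track of this boundary.
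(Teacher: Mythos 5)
Your proof is correct and follows essentially the same route as the paper: part (1) via the factorization $I_{r,s} = (u_{1}\cdots u_{r})(u_{r+1}, \ldots, u_{s+1})$ combined with Lemma \ref{LemB}(1) and Lemma \ref{LemA}, and part (2) via the short exact sequence induced by multiplication by $u_{1}\cdots u_{r}$, using the same identities $I_{r,s} + (u) = (u)$ and $I_{r,s} : (u) = (u_{r+1}, \ldots, u_{s+1})$. Your only addition is the explicit check that the numerator does not vanish at $\lambda = 1$ (so that the displayed numerator is genuinely the $h$-polynomial), a point the paper leaves implicit.
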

\begin{proof}
(1) Since $I_{r, s} = (u_{1}u_{2} \cdots u_{r}) (u_{r+1}, \ldots, u_{s+1})$, it follows from Lemma \ref{LemB} (1) that $\reg (I_{r, s}) = r + 1$. 
Hence Lemma \ref{LemA} says that $\reg (K[u_{1}, \ldots, u_{s+1}] / I_{r, s}) = r$, as desired.

\smallskip

(2) Let $u = u_{1}u_{2} \cdots u_{r}$.  Then $I_{r, s} + (u) = (u)$ and $I_{r, s} : (u) = (u_{r+1}, \ldots, u_{s+1})$.  Thus the short exact sequence
\[
0 \to \frac{K[u_{1}, \ldots, u_{s+1}]}{I_{r, s} : (u)} (-r) \xrightarrow{\ \times u \ } \frac{K[u_{1}, \ldots, u_{s+1}]}{I_{r, s}} \to \frac{K[u_{1}, \ldots, u_{s+1}]}{I_{r, s} + (u)} \to 0, 
\]
yields
\begin{eqnarray*}
H_{K[u_{1}, \ldots, u_{s+1}] / I_{r, s}} (\lambda) &=&H_{K[u_{1}, \ldots, u_{s+1}] / (u)} (\lambda) + \lambda^{r} \cdot H_{K[u_{1}, \ldots, u_{s+1}] / (u_{r+1}, \ldots, u_{s+1})} (\lambda) \\
& & \\
&=& \frac{1 + \lambda + \cdots + \lambda^{r-1}}{(1 - \lambda)^{s}} + \frac{\lambda^{r}}{(1-\lambda)^{r}} \\
& & \\
&=& \frac{1 + \lambda + \cdots + \lambda^{r-1} + \lambda^{r}(1-\lambda)^{s-r}}{(1 - \lambda)^{s}}, 
\end{eqnarray*}
as required.
\qed

\end{proof}

Proposition \ref{PropA} guarantees that, when $1 \leq r \leq s$, Conjecture \ref{conj} is true.  

Now, in the second step of a proof of Theorem \ref{main}, we turn to the discussion of finding a desired monomial ideal for $1 \leq s < r$.  

Let $n \geq 2$ and 
\[
S_{n} = K[x, y_{1}, \ldots, y_{n}, z_{1}, \ldots, z_{n + 1}]
\]   
the polynomial ring in $2(n + 1)$ variables over a field $K$.  We then introduce the monomial ideals $I_{n}, J_{n}, K_{n}$ and $L_{n}$ defined as follows: 

\newpage
 

\begin{eqnarray*}
I_{n} &=& (xy_{1}y_{2} \cdots y_{n}, xz_{1}z_{2} \cdots z_{n+1}) 
+ \sum_{i = 1}^{n - 2} (y_{1}, \ldots, y_{i}, y_{i+1} \cdots y_{n})(z_{i}) \\
& & \, \, \, \, \, \, \, \, \, \, \, \, \, \, \, \, \, \, \, \, + \, (y_{1}, \ldots, y_{n})(z_{n - 1}, z_{n}, z_{n + 1}), \\
& & \\
J_{n} &=& (xz_{1}z_{2} \cdots z_{n+1}) + \sum_{i = 1}^{n - 1}(z_{i}, \ldots, z_{n + 1})(y_{i}), \\
& & \\
K_{n} &=& (xy_{1}y_{2} \cdots y_{n-1}) + \sum_{i = 1}^{n - 2} (y_{1}, \ldots, y_{i}, y_{i+1} \cdots y_{n-1})(z_{i}), \\
& & \\
L_{n} &=& \sum_{i = 1}^{n - 1} (z_{i}, \ldots, z_{n})(y_{i}). 
\end{eqnarray*}

\medskip

\begin{Remark}\label{IJKL}
One has
\begin{enumerate}
	\item[$(1)$] $I_{n} + (y_{n}) = (y_{n}) + J_{n}$\,$;$ 
	\item[$(2)$] $I_{n} : (y_{n}) = (z_{n - 1}, z_{n}, z_{n + 1}) + K_{n}$\,$;$  
	\item[$(3)$] $J_{n} + (z_{n + 1}) = (z_{n + 1}) + L_{n}$\,$;$ 
	\item[$(4)$] $J_{n} : (z_{n + 1}) = (xz_{1}z_{2} \cdots z_{n}) + (y_{1}, \ldots, y_{n - 1})$\,$;$ 
	\item[$(5)$] $K_{n} + (y_{n - 1}) = (y_{n - 1}) + \sum_{i = 1}^{n - 2} (y_{1}, \ldots, y_{i}) (z_{i})$ \, if \, $n \ge 3$\,$;$ 
	\item[$(6)$] $K_{n} : (y_{n - 1}) = (z_{n - 2}) + K_{n - 1}$ \, if \, $n \ge 3$\,$;$  
	\item[$(7)$] $L_{n}$ 
	can be regarded as the edge ideal {\rm (\cite[p.~156]{HH})} of the Ferrers graph 
	associated with the partition 
	$(n-1, n-1, n-2, \ldots, 2, 1)$ and
	$\sum_{i = 1}^{n - 2} (y_{1}, \ldots, y_{i}) (z_{i})$ appearing in $($5$)$
	can be regarded as that with the partition
	$(n-2, n-1, \ldots, 2, 1))$, see \cite{CN}.  
	By using \cite[Theorem 2.1]{CN}, one has 
	\[
	\reg (L_{n}) = \reg \left(\sum_{i = 1}^{n - 2} (y_{1}, \ldots, y_{i}) (z_{i})\right) = 2,
	\] 
	\[
	H_{K[y_{1}, \ldots, y_{n-1}, z_{1}, \ldots, z_{n}]/L_{n}} (\lambda) = \frac{1 + (n-1)\lambda - (n-1)\lambda^{2}}{(1-\lambda)^{n}}, 
	\]
	\[
	H_{K[y_{1}, \ldots, y_{n-2}, z_{1}, \ldots, z_{n-2}] / \sum_{i = 1}^{n - 2} (y_{1}, \ldots, y_{i}) (z_{i})} (\lambda) = \frac{1 + (n-2)\lambda}{(1 - \lambda)^{n - 2}}. 
	\]
\end{enumerate}
\end{Remark}

\medskip

\begin{Lemma}\label{J}
One has 
\begin{enumerate}
	\item[$(1)$] $\reg (J_{n}) = n + 2$\,$;$
	\item[$(2)$] $\displaystyle H_{K[x, y_{1}, \ldots, y_{n-1}, z_{1}, \ldots, z_{n + 1}]/J_{n}} (\lambda) = \frac{1 + n\lambda- (n-2)\lambda^{2} + \lambda^{3} + \cdots + \lambda^{n+1}}{(1-\lambda)^{n+1}}$. 
\end{enumerate}
\end{Lemma}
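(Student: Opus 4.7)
The plan is to sandwich $\reg(J_n)$ from below and above, both by $n+2$.  For the lower bound, observe that $xz_1 z_2 \cdots z_{n+1}$ lies in the unique minimal monomial generating set of $J_n$: every other listed generator in $\sum_{i=1}^{n-1}(z_i, \ldots, z_{n+1})(y_i)$ contains some $y_i$, a variable absent from $xz_1 \cdots z_{n+1}$, so none of them can divide it.  Consequently $\beta_{0,n+2}(J_n) \neq 0$, and hence $\reg(J_n) \ge n+2$.

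For the upper bound, apply Lemma \ref{LemC} with the variable $z_{n+1}$.  By Remark \ref{IJKL}~(3)--(4),
\[
J_n + (z_{n+1}) = (z_{n+1}) + L_n, \qquad J_n : (z_{n+1}) = (xz_1 z_2 \cdots z_n) + (y_1, \ldots, y_{n-1}).
\]
Since $z_{n+1}$ is disjoint from the variables appearing in $L_n$, Lemma \ref{LemB}~(2) combined with Remark \ref{IJKL}~(7) gives $\reg(J_n + (z_{n+1})) = 1 + 2 - 1 = 2$.  Likewise, $(xz_1 \cdots z_n)$ and $(y_1, \ldots, y_{n-1})$ are supported on disjoint variable sets, so Lemma \ref{LemB}~(2) yields $\reg(J_n : z_{n+1}) = (n+1) + 1 - 1 = n+1$.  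Lemma \ref{LemC} now gives $\reg(J_n) \le \max\{(n+1)+1,\, 2\} = n+2$, completing (1).

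For (2), write $R = K[x, y_1, \ldots, y_{n-1}, z_1, \ldots, z_{n+1}]$ and use the short exact sequence
\[
0 \to (R/(J_n : z_{n+1}))(-1) \xrightarrow{\,\times z_{n+1}\,} R/J_n \to R/(J_n + (z_{n+1})) \to 0,
\]
which yields $H_{R/J_n}(\lambda) = H_{R/(J_n + (z_{n+1}))}(\lambda) + \lambda \cdot H_{R/(J_n : z_{n+1})}(\lambda)$.  Killing $z_{n+1}$ in $R/(J_n + (z_{n+1}))$ leaves $K[x] \tensor_K K[y_1, \ldots, y_{n-1}, z_1, \ldots, z_n]/L_n$, which by Remark \ref{IJKL}~(7) has Hilbert series $\bigl(1 + (n-1)\lambda - (n-1)\lambda^2\bigr)/(1-\lambda)^{n+1}$.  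Killing $y_1, \ldots, y_{n-1}$ in $R/(J_n : z_{n+1})$ leaves $K[x, z_1, \ldots, z_{n+1}]/(xz_1 \cdots z_n)$, a hypersurface quotient whose Hilbert series is $(1 - \lambda^{n+1})/(1-\lambda)^{n+2}$.  Putting the two series over the common denominator $(1-\lambda)^{n+2}$ and expanding produces the numerator
\[
1 + (n-1)\lambda - 2(n-1)\lambda^2 + (n-1)\lambda^3 - \lambda^{n+2};
\]
the only step requiring any vigilance is to verify that this polynomial factors as $(1-\lambda)\bigl(1 + n\lambda - (n-2)\lambda^2 + \lambda^3 + \cdots + \lambda^{n+1}\bigr)$, cancelling one factor of $(1-\lambda)$ to give the claimed $h$-polynomial over $(1-\lambda)^{n+1}$.
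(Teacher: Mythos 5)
Your proposal is correct and follows essentially the same route as the paper: the lower bound from the degree-$(n+2)$ minimal generator $xz_1\cdots z_{n+1}$, the upper bound from Lemma \ref{LemC} applied to $z_{n+1}$ with Remark \ref{IJKL}~(3)--(4), Lemma \ref{LemB}~(2) and Remark \ref{IJKL}~(7), and the same short exact sequence for the Hilbert series. The only cosmetic difference is that you place the two series over $(1-\lambda)^{n+2}$ and cancel a factor of $(1-\lambda)$ at the end, whereas the paper writes the hypersurface series as $(1+\lambda+\cdots+\lambda^{n})/(1-\lambda)^{n+1}$ from the start; the arithmetic checks out either way.
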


\newpage

\begin{proof}
(1) Since $xz_{1}z_{2} \cdots z_{n+1}$ belongs to the unique minimal system of monomial generators of $J_{n}$, it follows that $n + 2 \le \reg (J_{n})$.  We claim $\reg (J_{n}) \le n + 2$.  By using Lemma \ref{LemB} (2) together with Remark \ref{IJKL} (3) and (7), one has 
\[
\reg (J_{n} + (z_{n + 1})) = \reg((z_{n + 1}) + L_{n})) = \reg (L_{n}) = 2.  
\]
Furthermore, Lemma \ref{LemB} (2) together with Remark \ref{IJKL} (4) says that $\reg (J_{n} : (z_{n + 1})) = n + 1$. 
Hence, $\reg (J_{n}) \le n + 2$ follows from Lemma \ref{LemC}. 

\medskip

(2) Let $T = K[x, y_{1}, \ldots, y_{n-1}, z_{1}, \ldots, z_{n + 1}]$ and consider the short exact sequence
\[
0 \to \frac{T}{J_{n} : (z_{n + 1})} (-1) \xrightarrow{\ \times z_{n + 1}\ } \frac{T}{J_{n}} \to \frac{T}{J_{n} + (z_{n + 1})} \to 0. 
\]
Remark \ref{IJKL} (3) and (7) yield
\begin{eqnarray*}
H_{T / J_{n} + (z_{n + 1})} (\lambda) &=& H_{T / (z_{n + 1}) + L_{n}} (\lambda) \\
& & \\
&=& H_{K[y_{1}, \ldots, y_{n-1}, z_{1}, \ldots, z_{n}] / L_{n}} (\lambda) \cdot \frac{1}{(1-\lambda)}\\
& & \\
&=& \frac{1 + (n-1)\lambda - (n-1)\lambda^{2}}{(1-\lambda)^{n+1}}. 
\end{eqnarray*}
Furthermore, Remark \ref{IJKL} (4) yeilds 
\begin{eqnarray*}
H_{T / J_{n} : (z_{n + 1})} (\lambda) &=& H_{T / (xz_{1}z_{2} \cdots z_{n}) + (y_{1}, \ldots, y_{n - 1})} (\lambda) \\
& & \\
&=& H_{K[x, z_{1}, \ldots, z_{n}] / (xz_{1}z_{2} \cdots z_{n})} (\lambda) \cdot \frac{1}{(1-\lambda)}\\ 
& & \\
&=& \frac{1 + \lambda + \cdots + \lambda^{n}}{(1 - \lambda)^{n+1}}. 
\end{eqnarray*}
Hence 
\begin{eqnarray*}
H_{T / J_{n}} (\lambda) &=& H_{T / J_{n} + (z_{n + 1})} (\lambda) + \lambda \cdot H_{T / J_{n} : (z_{n + 1})} (\lambda) \\
& & \\
&=& \frac{1 + n\lambda - (n-2)\lambda^{2} + \lambda^{3} + \cdots + \lambda^{n+1}}{(1-\lambda)^{n+1}}, 
\end{eqnarray*}
as desired.
\qed

\end{proof}

\begin{Lemma}
\label{K}
One has
\begin{enumerate}
	\item[$(1)$] $\reg (K_{n}) = n; $  
	\item[$(2)$] $\displaystyle H_{K[x, y_{1}, \ldots, y_{n-1}, z_{1}, \ldots, z_{n - 2}] / K_{n}} (\lambda) = \frac{1 + \sum_{i = 1}^{n - 1}(n - i)\lambda^{i}}{(1-\lambda)^{n-1}}$. 
\end{enumerate}
\end{Lemma}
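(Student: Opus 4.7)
The plan is to induct on $n \ge 2$, mirroring the argument used for Lemma \ref{J}: I apply Lemma \ref{LemC} and the short exact sequence
\[
0 \to T/(K_{n} : (y_{n-1}))(-1) \xrightarrow{\,\times y_{n-1}\,} T/K_{n} \to T/(K_{n} + (y_{n-1})) \to 0,
\]
where $T = K[x, y_{1}, \ldots, y_{n-1}, z_{1}, \ldots, z_{n-2}]$, and use Remark \ref{IJKL}(5), (6), (7) to identify the two outer terms. The base case $n = 2$ is immediate: $K_{2} = (xy_{1})$, hence $\reg(K_{2}) = 2$ and $H_{K[x, y_{1}]/(xy_{1})}(\lambda) = (1 + \lambda)/(1 - \lambda)$, matching both formulas.

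For the inductive step $n \ge 3$, I would first settle (1). By Remark \ref{IJKL}(5), (7) together with Lemma \ref{LemB}(2),
\[
\reg(K_{n} + (y_{n-1})) = \reg\bigl((y_{n-1})\bigr) + \reg\Bigl(\sum_{i=1}^{n-2}(y_{1},\ldots,y_{i})(z_{i})\Bigr) - 1 = 1 + 2 - 1 = 2.
\]
By Remark \ref{IJKL}(6), the induction hypothesis $\reg(K_{n-1}) = n-1$, and a second application of Lemma \ref{LemB}(2), $\reg(K_{n} : (y_{n-1})) = 1 + (n-1) - 1 = n - 1$. Lemma \ref{LemC} then gives $\reg(K_{n}) \le \max\{n, 2\} = n$, while the minimal generator $xy_{1}\cdots y_{n-1}$ of degree $n$ forces the matching lower bound.

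For (2), the same short exact sequence at the level of Hilbert series, together with Remark \ref{IJKL}(5),(7) (treating $x$ as a free variable in $T$) and Remark \ref{IJKL}(6) with the induction hypothesis for (2) (treating $y_{n-1}$ as a free variable in $T$), produces
\[
H_{T/(K_{n} + (y_{n-1}))}(\lambda) = \frac{1 + (n-2)\lambda}{(1-\lambda)^{n-1}}, \qquad H_{T/(K_{n} : (y_{n-1}))}(\lambda) = \frac{1 + \sum_{i=1}^{n-2}(n-1-i)\lambda^{i}}{(1-\lambda)^{n-1}}.
\]
Summing the first with $\lambda$ times the second collapses the numerator to $1 + (n-1)\lambda + \sum_{j=2}^{n-1}(n-j)\lambda^{j} = 1 + \sum_{i=1}^{n-1}(n-i)\lambda^{i}$, as desired.

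The only delicate point is the bookkeeping of ambient rings: Remark \ref{IJKL}(7) gives the Hilbert series of $\sum_{i=1}^{n-2}(y_{1},\ldots,y_{i})(z_{i})$ in $K[y_{1},\ldots,y_{n-2},z_{1},\ldots,z_{n-2}]$, and the induction hypothesis gives that of $K_{n-1}$ in $K[x,y_{1},\ldots,y_{n-2},z_{1},\ldots,z_{n-3}]$, so one extra factor of $1/(1-\lambda)$ must be inserted in each case to account for the free variable ($x$ in the former, $y_{n-1}$ in the latter) that survives in $T$. Once this is correctly tracked, neither step of the induction is substantive.
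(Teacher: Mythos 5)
Your proposal is correct and follows essentially the same route as the paper: induction on $n$ with base case $K_{2}=(xy_{1})$, the bounds $\reg(K_{n}+(y_{n-1}))=2$ and $\reg(K_{n}:(y_{n-1}))=n-1$ fed into Lemma \ref{LemC}, and the same short exact sequence for the Hilbert series, with the identical telescoping of the numerator. Your explicit remark about inserting the extra factor $1/(1-\lambda)$ for the surviving free variable is exactly the bookkeeping the paper performs implicitly.
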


\newpage

\begin{proof} 
Since $xy_{1}y_{2} \cdots y_{n-1}$ belongs to the unique minimal system of monomial generators of $J_{n}$, one has $n \le \reg (K_{n})$.  We claim $\reg (K_{n}) \le n$ and (2) by using induction on $n$.  Since $K_{2} = (xy_{1})$, each of the assertion is trivial for $n = 2$. 
Let $n > 2$. 
Lemma \ref{LemB} (2) together with Remark \ref{IJKL} (5) and (7) guarantees that
\[ 
\reg (K_{n} + (y_{n-1})) = \reg \left( (y_{n-1}) +  \sum_{i = 1}^{n - 2} (y_{1}, \ldots, y_{i}) (z_{i}) \right) = 2.
\] 
Moreover, by virtue of Lemma \ref{LemB} (2), Remark \ref{IJKL} (6) as well as the induction hypothesis, it follows that 
\[
\reg (K_{n} : (y_{n - 1})) = \reg ((z_{n-2}) + K_{n-1}) = \reg (K_{n-1}) = n - 1.
\] 
Hence, Lemma \ref{LemC} says that $\reg (K_{n}) \le n$. 

Now, consider the short exact sequence
\[
0 \to \frac{T'}{K_{n} : (y_{n - 1})} (-1) \xrightarrow{\ \times y_{n - 1}\ } \frac{T'}{K_{n}} \to \frac{T'}{K_{n} + (y_{n - 1})} \to 0, 
\]
where $T' = K[x, y_{1}, \ldots, y_{n-1}, z_{1}, \ldots, z_{n - 2}]$. 
It follows from Remark \ref{IJKL} (5) and (7) that  
\begin{eqnarray*}
H_{T' / K_{n} + (y_{n - 1})} (\lambda) &=& H_{T' / (y_{n - 1}) + \sum_{i = 1}^{n - 2} (y_{1}, \ldots, y_{i}) (z_{i})} (\lambda) \\
& & \\
&=& H_{K[y_{1}, \ldots, y_{n-2}, z_{1}, \ldots, z_{n-2}] / \sum_{i = 1}^{n - 2} (y_{1}, \ldots, y_{i}) (z_{i})} (\lambda) \cdot \frac{1}{1 - \lambda}\\
& & \\ 
&=& \frac{1 + (n-2)\lambda}{(1-\lambda)^{n-1}}. 
\end{eqnarray*}
Furthermore, Remark \ref{IJKL} (6) as well as the induction hypothesis guarantees that 
\begin{eqnarray*}
H_{T' / K_{n} : (y_{n - 1})} (\lambda) &=& K_{T' / (z_{n - 2}) + K_{n - 1}} (\lambda) \\
& & \\
&=& H_{K[x, y_{1}, \ldots, y_{n-2}, z_{1}, \ldots, z_{n - 3}] / K_{n - 1}} (\lambda) \cdot \frac{1}{1 - \lambda}\\
& & \\ 
&=& \frac{1 + \sum_{i = 1}^{n - 2}(n - 1 - i)\lambda^{i}}{(1-\lambda)^{n-1}}. 
\end{eqnarray*}

\newpage

\noindent
Hence, one has
\begin{eqnarray*}
H_{T' / K_{n}} (\lambda) &=& H_{T' / K_{n} + (y_{n - 1})} (\lambda) + \lambda \cdot H_{T' / K_{n} : (y_{n - 1})} (\lambda) \\
& & \\
&=& \frac{1 + (n-2)\lambda + \lambda + \lambda \cdot \sum_{i = 1}^{n - 2}(n - 1 - i)\lambda^{i}}{(1-\lambda)^{n-1}} \\
& & \\
&=& \frac{1 + (n-1)\lambda + \sum_{i = 1}^{n - 2}(n - 1 - i)\lambda^{i + 1}}{(1-\lambda)^{n-1}} \\
& & \\
&=& \frac{1 + (n-1)\lambda + \sum_{i = 2}^{n - 1}(n - i)\lambda^{i}}{(1-\lambda)^{n-1}} \\ 
& & \\
&=& \frac{1 + \sum_{i = 1}^{n - 1}(n - i)\lambda^{i}}{(1-\lambda)^{n-1}}, 
\end{eqnarray*}
as desired.
\qed
 
\end{proof}

The monomial ideal $I_{n}$ plays an important role in our proof of Theorem \ref{main}. 

\begin{Proposition}\label{I}
One has
\begin{enumerate}
	\item[$(1)$] $\reg (S_{n} / I_{n}) = n + 1$\,$;$ 	
	\item[$(2)$] $\displaystyle H_{S_{n} / I_{n}} (\lambda) = \frac{1 + (n + 1)\lambda}{(1-\lambda)^{n + 1}}$. 
\end{enumerate}
Thus, in particular, $\deg h_{S_{n} / I_{n}} (\lambda)= 1$. 
\end{Proposition}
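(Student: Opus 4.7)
The approach is to mirror the strategy used for Lemmas \ref{J} and \ref{K}: peel off the variable $y_n$ from $I_n$ via the decompositions $I_n + (y_n) = (y_n) + J_n$ and $I_n : (y_n) = (z_{n-1}, z_n, z_{n+1}) + K_n$ recorded in Remark \ref{IJKL} (1)--(2), and then feed in the regularity and Hilbert series data for $J_n$ and $K_n$ supplied by Lemmas \ref{J} and \ref{K}.

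For the regularity, I would first observe that $J_n$ lives in the subring $K[x, y_1, \ldots, y_{n-1}, z_1, \ldots, z_{n+1}]$, which does not involve $y_n$, so Lemma \ref{LemB} (2) together with Lemma \ref{J} (1) gives
\[
\reg(I_n + (y_n)) = \reg((y_n) + J_n) = 1 + (n+2) - 1 = n + 2.
\]
Similarly, $K_n$ does not involve $z_{n-1}, z_n, z_{n+1}$, so Lemma \ref{LemB} (2) and Lemma \ref{K} (1) yield
\[
\reg(I_n : (y_n)) = \reg((z_{n-1}, z_n, z_{n+1}) + K_n) = 1 + n - 1 = n.
\]
Applying Lemma \ref{LemC} with the variable $y_n$ then gives $\reg(I_n) \leq \max\{n + 1, n + 2\} = n + 2$, whence $\reg(S_n/I_n) \leq n + 1$ by Lemma \ref{LemA}. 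For the reverse inequality, I would note that $xz_1 z_2 \cdots z_{n+1}$ is a minimal generator of $I_n$ of degree $n + 2$: every other monomial in the exhibited generating set contains some $y_j$ and hence cannot divide $xz_1 z_2 \cdots z_{n+1}$. Consequently $\reg(I_n) \geq n + 2$, so $\reg(S_n/I_n) = n + 1$.

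For the Hilbert series, I would use the short exact sequence
\[
0 \to (S_n/(I_n : (y_n)))(-1) \xrightarrow{\ \times y_n\ } S_n/I_n \to S_n/(I_n + (y_n)) \to 0.
\]
On the right, $S_n/(I_n + (y_n)) \cong K[x, y_1, \ldots, y_{n-1}, z_1, \ldots, z_{n+1}]/J_n$, so Lemma \ref{J} (2) supplies its Hilbert series directly. On the left, $y_n$ survives as a free variable while $z_{n-1}, z_n, z_{n+1}$ are killed, giving $S_n/(I_n : (y_n)) \cong K[y_n] \otimes_K K[x, y_1, \ldots, y_{n-1}, z_1, \ldots, z_{n-2}]/K_n$, whose Hilbert series follows from Lemma \ref{K} (2) with an extra factor of $1/(1-\lambda)$.

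The remaining step is to add the two contributions (with the degree shift on the left) over the common denominator $(1-\lambda)^{n+1}$ and verify that the numerator simplifies to $1 + (n+1)\lambda$. The main obstacle is precisely this polynomial cancellation: the numerator coming from $S_n/(I_n + (y_n))$ has the mixed-sign shape $1 + n\lambda - (n-2)\lambda^2 + \lambda^3 + \cdots + \lambda^{n+1}$, and one must check coefficient-by-coefficient that its sum with $\lambda(1-\lambda)\bigl(1 + \sum_{i=1}^{n-1}(n-i)\lambda^i\bigr)$ collapses to $1 + (n+1)\lambda$. This is mechanical once the bookkeeping is set up, but it is where a miscounted coefficient would be easy to introduce.
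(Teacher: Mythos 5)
Your proposal follows exactly the paper's own argument: the same decomposition of $I_n$ by the variable $y_n$ via Remark \ref{IJKL} (1)--(2), the same use of Lemmas \ref{LemB} (2), \ref{J} (1), \ref{K} (1) and \ref{LemC} for the upper bound on regularity together with the minimal generator $xz_1z_2\cdots z_{n+1}$ for the lower bound, and the same short exact sequence for the Hilbert series. The final cancellation you defer does collapse the numerator to $1+(n+1)\lambda$ exactly as the paper checks term by term, so the outline is correct and complete.
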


\begin{proof}
(1) By virtue of Lemma \ref{LemA}, it is sufficient to show that $\reg (I_{n}) = n + 2$.  Since $xz_{1}z_{2} \cdots z_{n+1}$ belongs to the unique minimal system of monomial generators of $I_{n}$, one has $n + 2 \le \reg (I_{n})$.  We claim $\reg (I_{n}) \le n + 2$.  It follows from Lemma \ref{LemB} (2), Remark \ref{IJKL} (1) together with Lemma \ref{J} (1) that 
\[
\reg (I_{n} + (y_{n})) = \reg((y_{n}) + J_{n}) = \reg (J_{n}) = n + 2.
\] 
By using Lemma \ref{LemB} (2), Remark \ref{IJKL} (2) together with Lemma \ref{K} (1), one has
\[
\reg (I_{n} : (y_{n})) = \reg((z_{n - 1}, z_{n}, z_{n + 1}) + K_{n}) = \reg (K_{n}) = n.
\] 
Hence Lemma \ref{LemC} says that $\reg (I_{n}) \le n + 2$, as desired.

\medskip

(2) Considering the short exact sequence

\[
0 \to \frac{S_{n}}{I_{n} : (y_{n})} (-1) \xrightarrow{\ \times y_{n}\ } \frac{S_{n}}{I_{n}} \to \frac{S_{n}}{I_{n} + (y_{n})} \to 0.  
\]
Remark \ref{IJKL} (1) together with Lemma \ref{J} (2) yields 
\begin{eqnarray*}
H_{S_{n} / I_{n} + (y_{n})} (\lambda) &=& H_{S_{n} / (y_{n}) + J_{n}} (\lambda) \\
&=& H_{K[x, y_{1}, \ldots, y_{n-1}, z_{1}, \ldots, z_{n+1}] / J_{n}} (\lambda) \\
& & \\
&=& \frac{1 + n\lambda - (n-2)\lambda^{2} + \lambda^{3} + \cdots + \lambda^{n+1}}{(1-\lambda)^{n+1}}. 
\end{eqnarray*}
Furthermore, Remark \ref{IJKL} (2) together with Lemma \ref{K} (2) yields  
\begin{eqnarray*}
H_{S_{n} / I_{n} : (y_{n})} (\lambda) &=& H_{S_{n} / (z_{n-1}, z_{n}, z_{n+1}) + K_{n}} (\lambda) \\
& & \\
&=& H_{K[x, y_{1}, \ldots, y_{n-1}, z_{1}, \ldots, z_{n-2}] / K_{n}} (\lambda) \cdot \frac{1}{1-\lambda}\\
& & \\
&=& \frac{1 + \sum_{i = 1}^{n - 1}(n - i)\lambda^{i}}{(1-\lambda)^{n}}. 
\end{eqnarray*}
It then follows that
\begin{eqnarray*}
& & H_{S_{n} / I_{n}} (\lambda) = H_{S_{n} / I_{n} + (y_{n})} (\lambda) + \lambda \cdot H_{S_{n} / I_{n} : (y_{n})} (\lambda) \\
& & \\
&=& \frac{1 + n\lambda - (n-2)\lambda^{2} + \lambda^{3} + \cdots + \lambda^{n+1}}{(1-\lambda)^{n+1}} + \frac{t + \sum_{i = 1}^{n - 1}(n - i)\lambda^{i+1}}{(1-\lambda)^{n}} \\
& & \\
&=& \frac{1 + n\lambda - (n-2)\lambda^{2} + \lambda^{3} + \cdots + \lambda^{n+1} + \lambda(1-\lambda) + (1-\lambda)\sum_{i = 1}^{n - 1}(n - i)\lambda^{i+1}}{(1-\lambda)^{n+1}} \\
& & \\
&=& \frac{1 + (n + 1)\lambda - (n-1)\lambda^{2} + \sum_{i = 2}^{n} \lambda^{i + 1}+ \sum_{i = 1}^{n - 1}(n - i)\lambda^{i+1} - \sum_{i = 1}^{n - 1}(n - i)\lambda^{i+2}}{(1-\lambda)^{n+1}} \\
& & \\
&=& \frac{1 + (n + 1)\lambda - (n-1)\lambda^{2} + \sum_{i = 2}^{n} \lambda^{i + 1}+ \sum_{i = 1}^{n - 1}(n - i)\lambda^{i+1} - \sum_{i = 2}^{n}(n - i + 1)\lambda^{i+1}}{(1-\lambda)^{n+1}} \\
& & \\
&=& \frac{1 + (n + 1)\lambda - (n-1)\lambda^{2} + \sum_{i = 2}^{n} \lambda^{i + 1} + (n-1)\lambda^{2} - \sum_{i = 2}^{n} \lambda^{i + 1}}{(1-\lambda)^{n+1}} \\
& & \\
&=& \frac{1 + (n + 1)\lambda}{(1-\lambda)^{n + 1}},
\end{eqnarray*}
as required.
\qed

\end{proof}

We are now in the position to finish a proof of Theorem \ref{main}.

\newpage

\begin{proof}({\em Proof of Theorem \ref{main}.})
Let $r$ and $s$ be positive integers with $r, s \geq 1$. 
By virtue of Proposition \ref{PropA}, only the case of $r > s \geq 1$ will be discussed.
%
Let $S_{1} = K[x, y_{1}, z_{1}, z_{2}]$ and $I_{1} = (xy_{1}, xz_{1}z_{2}, y_{1}z_{1}, y_{1}z_{2})$.  It then follows that $\reg (S_{1} / I_{1}) = 2$ and $H_{S_{1} / I_{1}} (\lambda) = \frac{1 + 2\lambda}{(1-\lambda)^{2}}$. 
By virtue of this example and of Proposition \ref{I}, one has
\[
\reg (S_{r-s} / I_{r-s}) = r - s + 1, \ \ H_{S_{r-s} / I_{r-s}} (\lambda) = \frac{1 + (r-s + 1)\lambda}{(1-\lambda)^{r-s + 1}}. 
\]
Let $S = S_{r-s} \otimes_{K} K[u_{1}, \ldots, u_{s}]$ and $I = I_{r-s} + (u_{1}u_{2} \cdots u_{s})$.  Lemma \ref{LemB} (3) together with Proposition \ref{PropA} yields 
\[
\reg (S / I) = r - s + 1 + s - 1 = r
\]
and
\[
H_{S / I} (\lambda) = \frac{ \{1 + (r-s+1)\lambda\}(1 + \lambda + \cdots + \lambda^{s-1})}{(1-\lambda)^{r}}. 
\]
Hence $\deg h_{S / I} (\lambda) = s$ and $I$ is a desired monomial ideal.
\qed
 
\end{proof}

\bigskip

\section{Examples}

The purpose of this section is to give a class of edge ideals $I \subset S$ of {\em Cameron--Walker graphs} (\cite{HHKO}) with $\reg(S/I) = \deg h_{S/I}(\lambda)$ for which $S/I$ is not Cohen--Macaulay. 

Let $G$ be a finite simple graph on the vertex set $[n] = \{1, \ldots, n\}$ and $E(G)$ its edge set.  (A finite graph is called {\em simple} if it possesses no loop and no multiple edge.)  The {\em edge ideal} $I(G)$ of $G$ is the monomial ideal of $S = K[x_1, \ldots. x_n]$ generated by those quadratic monomials $x_{i}x_{j}$ with $\{i, j\} \in E(G)$: 
\[
I(G) = ( \, x_{i}x_{j} \, : \,  \{i, j\} \in E(G) \, ) \subset S. 
\] 
In general, it is quite difficult to compute the regularity of an edge ideal.  However, one can compute $\reg (I(G))$ easily if $G$ is a Cameron--Walker graph.  The notion of Cameron-Walker graph was introduced by \cite{CW}.  We refer the reader to \cite[p.~258]{HHKO} for a classification of Cameron--Walker graphs. 

\begin{Example}
{\em
Fix $m \ge 1$ and write $G^{1}_{m}$ for the star triangle joining $m$ triangles 
at one common vertex. Then $S/I(G^{1}_{m})$ is Cohen--Macaulay if and only if $m = 1$ (\cite[Theorem 1.3]{HHKO}).  Hence
\begin{itemize}
	\item If $m = 2k$, then $\reg (S/I(G^{1}_{m})) = 2k > 2k - 1 = \deg h_{S/I(G^{1}_{m})} (\lambda)$.
	\item If $m = 2k + 1$, then $\reg (S/I(G^{1}_{m})) = \deg h_{S/I(G^{1}_{m})} (\lambda) = 2k + 1$. 
\end{itemize}
}
\end{Example}

\newpage

\begin{Example}
{\em 
Fix $m \ge 1$ and write $G^{2}_{m}$ for the graph drawn below on the vertex set $[2m + 3]$:

\bigskip
 
\begin{xy}
	\ar@{} (0,0);(40,-8) *{\text{$G^2_{m} =$}};
	\ar@{} (0,0);(70, 8) *++!L{2m+3} *\cir<4pt>{} = "A";
	\ar@{-} "A";(70, 0) *++!L{2m+2} *\cir<4pt>{} = "B";
	\ar@{-} "B";(70, -8) *++!L{2m+1} *\cir<4pt>{} = "C";
	\ar@{-} "C";(50, -20) *++!R{1} *\cir<4pt>{} = "D";
	\ar@{-} "C";(54, -24) *++!U{2} *\cir<4pt>{} = "E";
	\ar@{-} "D";"E";
	\ar@{} "A"; (70, -13) *++!U{\cdots}
	\ar@{-} "C";(90, -20) *++!L{2m} *\cir<4pt>{} = "F";
	\ar@{-} "C";(86, -24) *++!U{2m-1} *\cir<4pt>{} = "G";
	\ar@{-} "F";"G"; 
\end{xy}

\bigskip

\noindent
Then \cite[Theorem 1.3]{HHKO} says that $G^{2}_{m}$ is not Cohen--Macaulay if $m \ge 2$.  However,
\[
\reg (S/I(G^{2}_{m})) = \deg h_{S/I(G^{2}_{m})} (\lambda) = m + 1.
\] 
On the other hand, this graph is of interest from the viewpoint of $h$-vector 
$(h_{0}, h_{1}, \ldots, h_{s})$, which is the sequence of coefficients of $h$-polynomial.  It is known \cite[Theorem 4.4]{S} that the $h$-vector of Gorenstein ring is symmetric, but the converse is false.  A routine computation shows that 
\[
h_{S/I(G^{2}_{m})} (\lambda) =  (1 + \lambda)^{m + 1} + \lambda(1-\lambda)^{m-1}. 
\]
Hence $h$-vector of $S/I(G^{2}_{m})$ is symmetric if $m$ is odd, but not necessary unimodal.  For example, the $h$-vector of $S/I(G^{2}_{3})$ is $(1,5,4,5,1)$, which is not unimodal.  In general, the $h$-vector of $S/I(G^{2}_{m})$ is not unimodal if $m = 4k + 3$.
} 
\end{Example}

\bigskip

\noindent
{\bf Acknowledgment.}
The first author was partially supported by JSPS KAKENHI 26220701.
The second author was partially supported by JSPS KAKENHI 17K14165.

\bigskip

\end{document}